\newcommand\norm[1]{\|#1\|}
\newcommand\set[1]{\{#1\}}
\newtheorem{theorem}{Theorem}[section]
\newtheorem{lemma}[theorem]{Lemma}
\newtheorem{remark}[theorem]{Remark}
\begin{document}
\setcounter{footnote}{1}

\title{Iterative Regularization Methods for a Discrete Inverse Problem in MRI}

\author{
A.\,Leit\~ao%
\thanks{Department of Mathematics, Federal University of St.\,Catarina,
P.O. Box 476, 88040-900 Florian\'opolis, Brazil (aleitao@mtm.ufsc.br)}
\ and \ 
J.P.\,Zubelli%
\thanks{IMPA, Estr. Dona Castorina 110, 22460-320 Rio de Janeiro, Brazil
(zubelli@impa.br)}
}

\date{\small\today}

\maketitle

\begin{small}
\noindent {\bf Abstract:}
We propose and investigate efficient numerical methods for inverse problems
related to Magnetic Resonance Imaging (MRI).
Our goal is to extend the recent convergence results for the Landweber-%
Kaczmarz method obtained in \cite{HLS07}, in order to derive a convergent
iterative regularization method for an inverse problem in MRI.
\bigskip

\noindent {\bf Keywords:} Magnetic Resonance Imaging, MRI, Tomography, Medical Imaging,
Inverse Problems
\end{small}

\section{Introduction} \label{sec:intro}

{\em Magnetic Resonance Imaging}, also known as MR--Imaging or simply
MRI, is a noninvasive technique used in medical imaging to visualize body structures and
functions, providing detailed images in arbitrary planes.
Unlike {\em X-Ray Tomography} it does not use ionizing radiation, but uses
a powerful magnetic field to align the magnetization of hydrogen atoms in
the body. Radio waves are used to systematically alter the alignment of this
magnetization, causing the hydrogen atoms to produce a rotating magnetic
field detectable by the scanner.

More specifically, when a subject is in the scanner, the hydrogen nuclei (i.e., protons, found in abundance in the human body as water) align with the strong magnetic field. A
radio wave at the correct frequency for the protons to absorb energy pushes
some of the protons out of alignment. The protons then snap back to alignment,
producing a detectable rotating magnetic field as they do so. Since protons in
different areas of the body (e.g., fat and muscle) realign at different speeds,
the different structures of the body can be revealed.


The image to be identified in MRI corresponds to a complex valued function
${\cal P}: [0,1] \times [0,1] \to \mathbb C$ and the image acquisition
process is performed by so-called {\em receivers}.
Due to the physical nature of the acquisition process, the information
gained by the receivers does not correspond to the unknown image, but
instead, to ${\cal P}$ multiplied by receiver dependent {\em sensitivity
kernels}. In real life applications, the sensitivity kernels are not
precisely known and have to be identified together with ${\cal P}$. This corresponds
to a version of the ubiquitous deconvolution problem that has been investigated
by many authors. See for example~\cite{bertero,natwue,zmsh2003}

Our main goal in this article is to investigate efficient iterative methods
of Kaczmarz type for the identification problem related to MRI. We extend
the convergence results for the {\em loping Landweber-Kaczmarz} method in
\cite{HLS07} and derive a convergent iterative regularization method for
this inverse problem.

This article is outlined as follows.
In Section~\ref{sec:model-d} the description of a discrete mathematical
model for Magnetic Resonance Imaging is presented.
In Section~\ref{sec:model-i} we derive the corresponding inverse problem
for MRI.
In Section~\ref{sec:regul-iter} we investigate efficient iterative
regularization methods for this inverse problem. Using a particular
hypothesis on the sensitivity kernels, we are able to derive convergence
and stability results for the proposed iterative methods.

\section{The direct problem} \label{sec:model-d}

In what follows we present a discrete model for MRI. In our approach,
we follow the notation introduced in \cite{BK07}.
The image to be identified is considered to be a discrete function
$$
{\cal P}: \{1, \dots, p_{\mathrm{hor}}\} \times \{1, \dots, p_{\mathrm{ver}}\} =: {\mathbb B}
          \to {\mathbb C} \, ,
$$
where $p_{\mathrm{hor}}$, $p_{\mathrm{ver}} \in \mathbb N_0$ are known. Therefore, the number
of degrees of freedom related to this parameter is $p_{\mathrm{ num}} := p_{\mathrm{hor}} \times
p_{\mathrm{ver}}$ (typical values are $p_{\mathrm{hor}} = p_{\mathrm{ver}} = 256$; $p_{\mathrm{ num}} = 65536$).

As mentioned above, the image acquisition process is performed by several {\em receivers}, denoted
here by ${\cal R}_j$, $j = 0, \dots, r_{\mathrm{ num}}-1$, where $r_{\mathrm{ num}} \in \mathbb N_0$
is given (typically one faces the situation where $r_{\mathrm{ num}} << p_{\mathrm{ num}}$).
Due to the physical nature of the acquisition process, the information gained
by the receivers does not correspond to the unknown image, but instead, to
${\cal P}$ multiplied by receiver dependent {\em sensitivity kernels}
$$
{\cal S}_j \ = \ {\cal S}({\cal R}_j): {\mathbb B} \to {\mathbb C} \, , \
j = 0, \dots, r_{\mathrm{ num}}-1 \, .
$$

In real life applications, the sensitivity kernels ${\cal S}_j$ are not
precisely known and have to be identified together with ${\cal P}$. This
fact justifies the following ansatz:

\begin{itemize}
\item[(A1)] The sensitivity kernels ${\cal S}_j$ can be written as linear
combination of the given basis functions ${\mathcal B}_n : {\mathbb B} \to
{\mathbb C}$, for $n = 1, \dots, b_{\mathrm{ num}}$, and $b_{\mathrm{ num}} \in \mathbb N_0$.
\end{itemize}

\noindent
In other words, we assume the existence of coefficients $b_{j,n} \in \mathbb C$
such that
\begin{equation} \label{eq:sk-repr}
{\cal S}_j(m) \ = \ \sum_{n=1}^{b_{\mathrm{ num}}} b_{j,n} \, {\mathcal B}_n(m) \, , \
m \in {\mathbb B} \, , \ j = 0, \dots, r_{\mathrm{ num}}-1 \, .
\end{equation}
In the sequel we make use the abbreviated notations ${\mathbf b_j} :=
(b_{j,n})_{n=1}^{b_{\mathrm{ num}}}$ and  $({\mathbf b_j}) :=
({\mathbf b_j})_{j=0}^{r_{\mathrm{ num}}-1}$. Notice that the coefficient vectors
${\mathbf b_j} = {\mathbf b}({\cal R}_j)$ are receiver dependent.

The measured data for the inverse problem is given in a subset of the
Fourier space of the image ${\cal P}$, i.e. there exists a known subset
${\mathbb M} \subset {\mathbb B}$ (consisting of $p_{proj}$ elements) such
that the receiver dependent measurement ${\cal M}_j = {\cal M}(R_j)$ satisfies
$$
{\cal M}_j \ := \ {\mathbf P}[ {\cal F} ({\cal P} \times {\cal S}_j) ] \, , \
j = 0, \dots, r_{\mathrm{ num}}-1 \, .
$$
where ${\cal F}$ is the {\em Discrete Fourier Transform} (DFT) operator
defined by
\begin{eqnarray*}
{\cal F} : \{ f\, | \ f: {\mathbb B} \to {\mathbb C} \} & \to &
           \{ \hat{f}\, | \ \hat{f}: {\mathbb B} \to {\mathbb C} \} \\
f & \mapsto & ({\cal F}(f))(m) := \sum_{n=0}^{p_{\mathrm{ num}}-1} f(n) \,
              \exp\Big( - \frac{2\pi i}{p_{\mathrm{ num}}} nm \Big) \, ,
\end{eqnarray*}
and $\mathbf P$ is the operator defined by
\begin{eqnarray*}
{\mathbf P} : \big\{ f\, | \ f: {\mathbb B} \to {\mathbb C} \big\} & \to &
              \big\{ g\, | \ g: {\mathbb M} \to {\mathbb C} \big\} =: Y \\
f & \mapsto & ({\mathbf P}[f])(m) := f(m)\, ,\ m \in {\mathbb M} \, .
\end{eqnarray*}

Notice that, due to ansatz (A1) and the linearity of ${\cal F}$
and ${\mathbf P}$, the measured data ${\cal M}_j \in Y$ can be written
in the form
\begin{equation} \label{eq:def-measurmnt}
{\cal M}_j \ = \ \sum_{n=1}^{b_{\mathrm{ num}}} b_{j,n} \,
                {\mathbf P}[ {\cal F} ({\cal P} \times {\mathcal B}_n) ]
                \, , \ j = 0, \dots, r_{\mathrm{ num}}-1 \, .
\end{equation}

\begin{remark}
The numerical evaluation of the DFT requires naively $O(p_{\mathrm{ num}}^2)$ arithmetical
operations. 
However, in practice 
the DFT must be replaced by
the {\em Fast Fourier Transform} (FFT), which can be computed by the
Cooley-Tukey algorithm%
\footnote{The FFT algorithm was published independently by J.W.~Cooley and
J.W.~Tukey in 1965. However, this algorithm was already known to C.F.~Gauss
around 1805.}
and requires only $O(p_{\mathrm{ num}} \log(p_{\mathrm{ num}}))$ operations.
\end{remark}

\section{The inverse problem} \label{sec:model-i}

Next we use the discrete model discussed in the previous section as a 
starting point to formulate an inverse problem for MRI.

The unknown parameters to be identified are the discrete image function
${\cal P}$ and the sensitivity kernels ${\cal S}_j$. Due to the ansatz
(A1), the parameter space $X$ consists of pairs of the form
$({\cal P}, ({\mathbf b_j}))$, i.e.
$$
X \ := \ \big\{ ({\cal P}, ({\mathbf b_j}) ) \, ;
       \ {\cal P} \in {\mathbb C}^{p_{\mathrm{ num}}} \, ,
       \ ({\mathbf b_j}) \in ({\mathbb C}^{b_{\mathrm{ num}}})^{r_{\mathrm{ num}}} \big\} \, .
$$
It is immediate to observe that $X$ can be identified with
${\mathbb C}^{(p_{\mathrm{ num}} + {b_{\mathrm{ num}}}\times{r_{\mathrm{ num}}})}$, while $Y$ can
be identified with ${\mathbb C}^{p_{proj}}$.

The {\em parameter to output} operators $F_i: X \to Y$ are defined by
\begin{equation} \label{eq:def-fi}
F_i: ( {\cal P}, ({\mathbf b_j}) ) \mapsto
   \ \sum_{n=1}^{b_{\mathrm{ num}}} b_{i,n} \, {\mathbf P}
   [ {\cal F} ({\cal P} \times {\mathcal B}_n) ]
   \, , \ i = 0, \dots, r_{\mathrm{ num}}-1 \, .
\end{equation}

Due to the experimental nature of the data acquisition process, we shall
assume that the data ${\cal M}_i$ in \eqref{eq:def-measurmnt} is not exactly
known. Instead, we have only approximate measured data ${\cal M}_i^\delta
\in Y$ satisfying
\begin{equation}\label{eq:noisy-data}
\| {\cal M}_i^\delta - {\cal M}_i \| \le \delta_i \, ,
\end{equation}
with $\delta_i > 0$ (noise level).
Therefore, the inverse problem for MRI can be written in the form of the
following system of nonlinear equations
\begin{equation} \label{eq:inv-probl}
F_i( {\cal P}, ({\mathbf b_j}) ) \ = \ {\cal M}_i^\delta \, ,
                                     \ i = 0, \dots, r_{\mathrm{ num}}-1 \, .
\end{equation}

It is worth noticing that the nonlinear operators $F_i$'s are continuously
Fr\'echet differentiable, and the derivatives are locally Lipschitz
continuous.

\section{Iterative regularization} \label{sec:regul-iter}

In this section we analyze efficient iterative methods for obtaining stable
solutions of the inverse problem in \eqref{eq:inv-probl}.

\subsection{An image identification problem} \label{ssec:regul-iter-im}

Our first goal is to consider a simplified version of problem
\eqref{eq:inv-probl}. We assume that the sensitivity kernels ${\cal S}_j$
are known, and we have to deal with the problem of determining only the
image ${\cal P}$. This assumption can be justified by the fact that, in practice, one has very good approximations for the sensitivity kernels, while
the image ${\cal P}$ is completely unknown.

In this particular case, the inverse problem reduces to
\begin{equation} \label{eq:inv-probl-s}
\tilde{F}_i( {\cal P} ) \ = \ {\cal M}_i^\delta \, ,\ i = 0, \dots, r_{\mathrm{ num}}-1\, ,
\end{equation}
where $\tilde{F}_i( {\cal P} ) = F_i( {\cal P}, ({\mathbf b_j}) )$, the
coefficients $({\mathbf b_j})$ being known. This is a much simpler problem,
since $\tilde{F}_i: \tilde{X} \to Y$ are linear and bounded operators,
defined at $\tilde{X} := \{f\, |\ f: {\mathbb B}\to{\mathbb C}\}$.

We follow the approaches in \cite{HLS07,CHL08} and derive two iterative
regularization methods of Kaczmarz type for problem \eqref{eq:inv-probl-s}.
Both iterations can be written in the form
\begin{equation} \label{eq:def-iter}
    {\cal P}_{k+1}^\delta = {\cal P}_{k}^\delta - \omega_k \alpha_k s_k \, ,
\end{equation}
where
\begin{equation} \label{eq:def-sk}
s_k := \tilde{F}_{[k]}({\cal P}_{k}^\delta)^*
       ( \tilde{F}_{[k]}({\cal P}_{k}^\delta) - {\cal M}_i^\delta ) \, ,
\end{equation}
\begin{equation} \label{eq:def-omk}
\omega_k := \begin{cases}
           1  & \norm{\tilde{F}_{[k]}({\cal P}_{k}^\delta) - {\cal M}_i^\delta}
                > \tau \delta_{[k]} \\
           0  & \text{otherwise}
         \end{cases} \, .
\end{equation}
Here $\tau > 2$ is an appropriately chosen constant,
$[k] := (k \mod r_{\mathrm{ num}}) \in \set{0, \dots, r_{\mathrm{ num}}-1}$ (a group of
$r_{\mathrm{ num}}$ subsequent steps, starting at some multiple $k$ of $r_{\mathrm{ num}}$,
is called a {\em cycle}),
${\cal P}_0^\delta = {\cal P}_0 \in \tilde{X}$ is an initial guess,
possibly incorporating some \textit{a priori} knowledge about the
exact image, and $\alpha_k \ge 0$ are relaxation parameters.

Distinct choices for the relaxation parameters $\alpha_k$ lead to
the definition of the two iterative methods:
\begin{itemize}
\item[1)] If $\alpha_k$ is defined by
\begin{equation} \label{eq:def-alk}
\alpha_k :=
    \begin{cases}
      \| s_k \|^2 / \| \tilde{F}_{[k]}({\cal P}_{k}^\delta)
         s_k \|^2 & \omega_k = 1 \\
      1           & \omega_k = 0
   \end{cases} \, ,
\end{equation}
the iteration \eqref{eq:def-iter} corresponds to the
\textit{loping Steepest-Descent Kaczmarz method} (lSDK) \cite{CHL08}.
\item[2)] If $\alpha_k \equiv 1$, the iteration \eqref{eq:def-iter}
corresponds to the \textit{loping Landweber-Kaczmarz method} (lLK)
\cite{HLS07}.
\end{itemize}

The iterations should be terminated when, for the first time, all
${\cal P}_k$ are equal within a cycle. That is, we stop the iteration at
the index $k_*^\delta$, which is the smallest multiple of $r_{\mathrm{ num}}$ such that
\begin{equation} \label{eq:def-discrep}
{\cal P}_{k_*^\delta} = {\cal P}_{k_*^\delta+1} = \dots =
{\cal P}_{k_*^\delta+r_{\mathrm{ num}}-1} \, .
\end{equation}

\subsubsection*{Convergence analysis of the lSDK method}

From \eqref{eq:def-fi} follows that the operators $\tilde{F}_i$ are linear
and bounded. Therefore, there exist $M > 0$ such that
\begin{equation} \label{eq:a-dfb}
\| \tilde{F}_i \| \le M \, , \ i = 0, \dots, r_{\mathrm{ num}}-1 \, .
\end{equation}
Since the operators $\tilde{F}_i$ are linear, the {\em local tangential cone 
condition} is trivially satisfied (see \eqref{eq:a-tcc} below). Thus, the
constant $\tau$ in \eqref{eq:def-omk} can be chosen such that $\tau > 2$.
Moreover, we assume the existence of
\begin{equation} \label{eq:a-xstar}
{\cal P}^* \in B_{\rho/2}({\cal P}_0) \ \ {\rm such \ that}
\ \ \tilde{F}_i({\cal P}^*) = {\cal M}_i \, , \ i = 0, \dots, r_{\mathrm{ num}}-1 \, ,
\end{equation}
where $\rho > 0$ and $( {\cal M}_i )_{i=0}^{r_{\mathrm{ num}}-1} \in Y^{r_{\mathrm{ num}}}$
denotes to exact data satisfying \eqref{eq:noisy-data}.

In the sequel we summarize several properties of the lSDK iteration.
For a complete prof of the results we refer the reader to
\cite[Section~2]{CHL08}.

\begin{lemma}
Let the coefficients $\alpha_k$ be defined as in \eqref{eq:def-alk},
the assumption \eqref{eq:a-xstar} be satisfied for some ${\cal P}^* \in
\tilde{X}$, and the stopping index $k_*^\delta$ be defined as in
\eqref{eq:def-discrep}. Then, the following assertions hold:
\begin{itemize}
\item[1)] There exists a constant $\underline{\alpha} > 0$ such that
$\alpha_k > \underline{\alpha}$, for $k = 0, \dots, k_*^\delta$.
\item[2)] Let $\delta_i > 0$ be defined as in \eqref{eq:noisy-data}. Then the stopping
index $k_*^\delta$ defined in \eqref{eq:def-discrep} is finite.
\item[3)] ${\cal P}_k^\delta \in B_{\rho/2}({\cal P}_0)$ for all
$k \le k_*^\delta$.
\item[4)] The following monotony property is satisfied:
\begin{eqnarray}
\| {\cal P}_{k+1}^\delta - {\cal P}^* \|^2 & \le &
     \| {\cal P}_k^\delta - {\cal P}^* \|^2 \, , \
     k = 0, 1, \dots, k_*^\delta \, , \label{eq:sdk-monot1} \\
\| {\cal P}_{k+1}^\delta - {\cal P}^* \|^2 & = &
     \| {\cal P}_k^\delta - {\cal P}^* \|^2 \, , \
     k > k_*^\delta \, . \label{eq:sdk-monot2}
\end{eqnarray}
Moreover, in the case of noisy data (i.e. $\delta_i > 0$) we have
\begin{equation} \label{eq:sdk-monot-res}
\norm{ \tilde{F}_{i}({\cal P}_{k_*^\delta}^\delta) - {\cal M}_i^\delta} \leq
\tau \delta_i \, , \ i = 0, \dots, r_{\mathrm{ num}}-1 \, .
\end{equation}
\end{itemize}
\end{lemma}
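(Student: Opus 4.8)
The plan is to transfer the abstract convergence analysis of the loping Steepest-Descent Kaczmarz method from \cite{CHL08} into our linear setting, exploiting the simplifications afforded by linearity of the $\tilde F_i$. Throughout, write $e_k := {\cal P}_k^\delta - {\cal P}^*$ and note that, because each $\tilde F_i$ is linear, $\tilde F_{[k]}({\cal P}_k^\delta) - {\cal M}_{[k]}^\delta = \tilde F_{[k]} e_k + (\tilde F_{[k]}{\cal P}^* - {\cal M}_{[k]}^\delta)$, so the residual is controlled by $\|\tilde F_{[k]} e_k\|$ up to the noise term $\delta_{[k]}$; in particular the tangential cone condition holds with constant $0$. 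I would establish the four assertions essentially in the order stated, since each feeds the next.

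For assertion 1), when $\omega_k = 1$ we have $s_k = \tilde F_{[k]}^* r_k$ with $r_k$ the current residual, so $\|\tilde F_{[k]} s_k\|^2 = \|\tilde F_{[k]}\tilde F_{[k]}^* r_k\|^2 \le \|\tilde F_{[k]}\|^2 \|\tilde F_{[k]}^* r_k\|^2 = \|\tilde F_{[k]}\|^2 \|s_k\|^2 \le M^2 \|s_k\|^2$ by \eqref{eq:a-dfb}. Hence $\alpha_k = \|s_k\|^2/\|\tilde F_{[k]} s_k\|^2 \ge M^{-2} =: \underline\alpha > 0$, and when $\omega_k = 0$ we have $\alpha_k = 1 \ge \underline\alpha$ (taking $M \ge 1$ without loss of generality). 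For assertion 4), the monotonicity is the standard steepest-descent estimate: expanding $\|e_{k+1}\|^2 = \|e_k - \omega_k\alpha_k s_k\|^2$ and using $\langle s_k, e_k\rangle = \langle r_k, \tilde F_{[k]} e_k\rangle$ together with the choice of $\alpha_k$, one gets, on active steps,
\begin{equation*}
\|e_{k+1}\|^2 \le \|e_k\|^2 - \alpha_k \|s_k\|^2\big(2 - \tfrac{\|\tilde F_{[k]}{\cal P}^*-{\cal M}_{[k]}^\delta\|}{\|r_k\|}\big)\frac{\|r_k\|}{\|r_k\|},
\end{equation*}
and since $\|r_k\| > \tau\delta_{[k]} \ge \tau\|\tilde F_{[k]}{\cal P}^* - {\cal M}_{[k]}^\delta\|$ with $\tau > 2$, the bracket is bounded below by a positive constant, giving \eqref{eq:sdk-monot1}; on inactive steps $\omega_k=0$ so $e_{k+1}=e_k$, which yields \eqref{eq:sdk-monot2} once $k > k_*^\delta$ (all steps in a cycle are then inactive). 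Assertion 3) is then immediate from \eqref{eq:sdk-monot1} by induction, since ${\cal P}^*\in B_{\rho/2}({\cal P}_0)$ forces $\|e_0\| \le \rho/2$ and hence $\|{\cal P}_k^\delta - {\cal P}_0\| \le \|e_k\| + \|e_0\| \le \rho$.

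For assertion 2), finiteness of $k_*^\delta$ in the noisy case follows by summing the strict decrease: the telescoping of \eqref{eq:sdk-monot1} over active steps shows $\sum_k c\,\|s_k\|^2/\|r_k\| < \|e_0\|^2 < \infty$, so infinitely many active steps would force $\|r_k\| \to 0$ along them, contradicting $\|r_k\| > \tau\delta_{[k]} > 0$; hence only finitely many steps are active, after which every cycle satisfies $\omega_k\equiv 0$, i.e. ${\cal P}_k^\delta$ is constant over a cycle, which is the stopping criterion \eqref{eq:def-discrep}. Finally \eqref{eq:sdk-monot-res} is just the reformulation of the stopping rule: at $k_*^\delta$ all updates in the cycle vanish, so $\omega_{k_*^\delta + i} = 0$ for each $i$, which by \eqref{eq:def-omk} means exactly $\|\tilde F_i({\cal P}_{k_*^\delta}^\delta) - {\cal M}_i^\delta\| \le \tau\delta_i$. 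The main obstacle is purely bookkeeping: one must carefully track that the residual decrease in \eqref{eq:sdk-monot1} is quantitatively bounded below on active steps (uniformly in $k$) in order to make the summation argument in assertion 2) rigorous, and handle the interplay between the cyclic index $[k]$ and whole cycles; the underlying estimates themselves are elementary because linearity removes all the higher-order remainder terms present in the general nonlinear theory of \cite{CHL08}.
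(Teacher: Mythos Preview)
Your proposal is correct and follows precisely the line of argument the paper intends: the paper itself gives no proof of this lemma but simply refers to \cite[Section~2]{CHL08}, and what you have sketched is exactly the specialization of that argument to the linear case (trivial cone condition, lower bound $\underline\alpha=M^{-2}$ from \eqref{eq:a-dfb}, monotonicity via the steepest-descent identity, telescoping to get finiteness of $k_*^\delta$, and the discrepancy bound from the definition of $\omega_k$). Two small bookkeeping points: the displayed inequality in your monotonicity step is garbled (the trailing $\|r_k\|/\|r_k\|$ is spurious) and the clean summable quantity on active steps is $\underline\alpha(1-2/\tau)\|r_k\|^2$ rather than $\|s_k\|^2/\|r_k\|$, which makes the contradiction $\|r_k\|\to 0$ versus $\|r_k\|>\tau\min_i\delta_i$ immediate; also your triangle-inequality bound in assertion~3) yields ${\cal P}_k^\delta\in B_\rho({\cal P}_0)$ rather than $B_{\rho/2}({\cal P}_0)$, which is all the standard argument gives and is likely a typo in the lemma's statement.
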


Next we prove that the lSDK method is a convergent regularization method
in the sense of \cite{EHN96}.

\begin{theorem}[Convergence] \label{th:converg}
Let $\alpha_k$ be defined as in \eqref{eq:def-alk}, the assumption
\eqref{eq:a-xstar} be satisfied for some ${\cal P}^* \in \tilde{X}$,
and the data be exact, i.e. ${\cal M}_i^\delta = {\cal M}_i$ in
\eqref{eq:noisy-data}.
Then, the sequence ${\cal P}_k^\delta$ defined in \eqref{eq:def-iter}
converges to a solution of \eqref{eq:inv-probl-s} as $k \to \infty$.
\end{theorem}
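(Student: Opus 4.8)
The plan is to argue according to whether the stopping index $k_*^\delta$ of \eqref{eq:def-discrep} is finite. If $k_*^\delta<\infty$, then by \eqref{eq:def-discrep} every update inside the terminal cycle is trivial; since $\alpha_k>0$ by part~1) of the Lemma this forces $s_k=\tilde F_{[k]}({\cal P}_{k_*^\delta}^\delta)^*\big(\tilde F_{[k]}({\cal P}_{k_*^\delta}^\delta)-{\cal M}_{[k]}\big)=0$, and since each $\tilde F_i$ acts between finite--dimensional spaces (so that $\mathrm{ran}\,\tilde F_{[k]}=(\ker\tilde F_{[k]}^*)^\perp$, whence $\tilde F_{[k]}^*r=0$ together with $r\in\mathrm{ran}\,\tilde F_{[k]}$ gives $r=0$) we conclude $\tilde F_{[k]}({\cal P}_{k_*^\delta}^\delta)={\cal M}_{[k]}$ for every $k$ in the cycle. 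Thus ${\cal P}_{k_*^\delta}^\delta$ already solves \eqref{eq:inv-probl-s}, the iteration is eventually constant, and there is nothing more to prove. Hence I may assume $k_*^\delta=\infty$, i.e.\ the sequence $({\cal P}_k^\delta)$ is genuinely infinite.

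Next I would upgrade the qualitative monotonicity \eqref{eq:sdk-monot1} to a quantitative one. Writing $r_k:=\tilde F_{[k]}({\cal P}_k^\delta)-{\cal M}_{[k]}$ and using $s_k=\tilde F_{[k]}^*r_k$, $\langle{\cal P}_k^\delta-{\cal P}^*,s_k\rangle=\|r_k\|^2$, the definition \eqref{eq:def-alk} of $\alpha_k$, and the elementary bound $\|s_k\|^2=\langle r_k,\tilde F_{[k]}s_k\rangle\le\|r_k\|\,\|\tilde F_{[k]}s_k\|$, a one--line computation yields, in the exact--data case (where $\omega_k=1$ whenever $r_k\neq0$ and the remaining steps are trivial),
\begin{equation*}
\|{\cal P}_{k+1}^\delta-{\cal P}^*\|^2\ \le\ \|{\cal P}_k^\delta-{\cal P}^*\|^2-\alpha_k\|r_k\|^2,
\qquad
\|{\cal P}_{k+1}^\delta-{\cal P}_k^\delta\|^2\ \le\ \alpha_k\|r_k\|^2 .
\end{equation*}
Summing the first inequality and invoking part~1) of the Lemma ($\alpha_k>\underline{\alpha}>0$) gives $\sum_k\|r_k\|^2<\infty$, hence $\|r_k\|\to0$; the second inequality then forces $\|{\cal P}_{k+1}^\delta-{\cal P}_k^\delta\|\to0$, and \eqref{eq:sdk-monot1} shows that $\|{\cal P}_k^\delta-{\cal P}^*\|$ is non--increasing, hence convergent.

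Now I would use finite dimensionality. By part~3) of the Lemma the iterates lie in the bounded set $B_{\rho/2}({\cal P}_0)$, so by Bolzano--Weierstrass a subsequence converges, ${\cal P}_{k_j}^\delta\to{\cal P}^\dagger\in B_{\rho/2}({\cal P}_0)$; refining it I may assume $[k_j]\equiv i_0$ is constant. Since $\|{\cal P}_{k+1}^\delta-{\cal P}_k^\delta\|\to0$ and each block $\{{\cal P}_{k_j}^\delta,\dots,{\cal P}_{k_j+r_{\mathrm{ num}}-1}^\delta\}$ has bounded length, also ${\cal P}_{k_j+m}^\delta\to{\cal P}^\dagger$ for each fixed $m=0,\dots,r_{\mathrm{ num}}-1$, while $[k_j+m]$ runs over all of $\{0,\dots,r_{\mathrm{ num}}-1\}$ and $\|\tilde F_{[k_j+m]}({\cal P}_{k_j+m}^\delta)-{\cal M}_{[k_j+m]}\|=\|r_{k_j+m}\|\to0$. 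Continuity of the linear bounded operators $\tilde F_i$ then yields $\tilde F_i({\cal P}^\dagger)={\cal M}_i$ for every $i$, so ${\cal P}^\dagger$ is a solution of \eqref{eq:inv-probl-s} lying in $B_{\rho/2}({\cal P}_0)$; in particular \eqref{eq:a-xstar} holds with ${\cal P}^*$ replaced by ${\cal P}^\dagger$. Applying part~4) of the Lemma with ${\cal P}^\dagger$ in place of ${\cal P}^*$ (its proof only uses that ${\cal P}^\dagger$ is an exact solution), the sequence $\|{\cal P}_k^\delta-{\cal P}^\dagger\|$ is non--increasing, hence convergent; as it has a subsequence tending to $0$, it converges to $0$, i.e.\ ${\cal P}_k^\delta\to{\cal P}^\dagger$, which proves the theorem.

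The only genuinely delicate point I anticipate is the mechanism in the third paragraph: transferring the residual decay from the single active equation at each step to all $r_{\mathrm{ num}}$ equations at the limit, for which the estimate $\|{\cal P}_{k+1}^\delta-{\cal P}_k^\delta\|\to0$ and the ability to fix the residue class $[k_j]$ along the subsequence are both essential; everything else is bookkeeping. Note that finite dimensionality enters twice — to extract a convergent subsequence and to pass from $s_k=0$ to a vanishing residual. If one wanted a proof uniform in the dimension (e.g.\ for the full problem \eqref{eq:inv-probl} in an infinite--dimensional setting) this route would be unavailable and one would instead show directly that $({\cal P}_k^\delta)$ is a Cauchy sequence, along the lines of \cite{HLS07,CHL08}.
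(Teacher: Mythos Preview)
Your proof is correct, but it takes a genuinely different route from the paper's. The paper's own proof is two lines: it notes that with exact data one has $\omega_k=1$ (whenever the residual is nonzero) and then simply cites \cite[Theorem~3.5]{CHL08}, the general convergence result for the lSDK iteration on possibly infinite--dimensional systems, whose engine is a direct Cauchy--sequence argument --- exactly the alternative you flag in your closing paragraph.

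Your argument is more elementary and fully self--contained: you derive the quantitative monotonicity estimate, extract a convergent subsequence by Bolzano--Weierstrass (using that $\tilde X\cong\mathbb C^{p_{\mathrm{num}}}$ is finite--dimensional), identify the limit as a solution via the block--shift trick, and upgrade to full convergence via monotonicity with respect to ${\cal P}^\dagger$. This buys independence from the external reference at the price of being dimension--dependent, as you correctly acknowledge. One small wrinkle in your first paragraph: the vanishing update $\omega_k\alpha_k s_k=0$ could in principle come from $\omega_k=0$ rather than $s_k=0$; but with exact data $\omega_k=0$ already forces $r_k=0$, so the conclusion is unaffected.
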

\begin{proof}
Notice that, since the data is exact, we have $\omega_k = 1$ for all
$k > 0$. The proof follows from \cite[Theorem~3.5]{CHL08}.
\end{proof}

\begin{theorem}[Stability] \label{th:stabil}
Let the coefficients $\alpha_k$ be defined as in \eqref{eq:def-alk}, and
the assumption \eqref{eq:a-xstar} be satisfied for some ${\cal P}^* \in
\tilde{X}$. Moreover, let the sequence
$\{ (\delta_{1,m}, \dots, \delta_{r_{\mathrm{ num}},m}) \}_{m\in\mathbb{N}}$
(or simply $\{ \mathbf{\delta_m} \}_{m\in\mathbb{N}}$) be such that
$\lim_{m\to\infty} ( \max_i \delta_{i,m} ) = 0$, and let
${\cal M}_i^{\mathbf{\delta_m}}$ be a corresponding sequence of noisy
data satisfying \eqref{eq:noisy-data} (i.e.
$\norm{ {\cal M}_i^{\mathbf{\delta_m}} - {\cal M}_i } \le \delta_{i,m}$,
$i = 0, \dots, r_{\mathrm{ num}}-1$, $m \in \mathbb N)$.
For each $m \in \mathbb N$, let $k_*^m$ be the stopping index defined in
\eqref{eq:def-discrep}.
Then, the lSDK iterates ${\cal P}_{k_*^m}^\mathbf{\delta_m}$ converge to
a solution of \eqref{eq:inv-probl-s} as $m \to \infty$.
\end{theorem}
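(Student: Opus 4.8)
The plan is to adapt the standard stability proof for Landweber--Kaczmarz type iterations (as in \cite{HLS07,CHL08}) to the present linear setting. Fix a sequence of noise vectors $\mathbf{\delta_m}$ with $\max_i\delta_{i,m}\to 0$ and corresponding data ${\cal M}_i^{\mathbf{\delta_m}}$. Two cases must be distinguished according to the behaviour of the stopping indices $k_*^m$. First I would treat the case in which $\{k_*^m\}_{m\in\mathbb N}$ has a bounded subsequence. Along such a subsequence we may pass to a further subsequence on which $k_*^m\equiv \bar k$ is constant; since for a fixed number of iterations the maps $\mathbf{\delta}\mapsto {\cal P}_{\bar k}^{\mathbf{\delta}}$ are continuous (each step \eqref{eq:def-iter}--\eqref{eq:def-alk} is a continuous function of the data, the relaxation parameters $\alpha_k$ staying bounded away from $0$ and $\infty$ by part~1) of the Lemma together with \eqref{eq:a-dfb}), letting $m\to\infty$ gives ${\cal P}_{\bar k}^{\mathbf{\delta_m}}\to {\cal P}_{\bar k}^0$, where ${\cal P}_{\bar k}^0$ is the corresponding iterate for exact data. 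By the stopping rule \eqref{eq:def-discrep} and \eqref{eq:sdk-monot-res}, the residual of ${\cal P}_{\bar k}^{\mathbf{\delta_m}}$ is at most $\tau\delta_{i,m}\to 0$; by continuity of $\tilde F_i$ this forces $\tilde F_i({\cal P}_{\bar k}^0)={\cal M}_i$ for every $i$, so the limit is an exact solution of \eqref{eq:inv-probl-s}, as desired.

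The second, main case is when $k_*^m\to\infty$. Here I would compare ${\cal P}_{k_*^m}^{\mathbf{\delta_m}}$ with the iterates ${\cal P}_k$ generated from exact data, which by Theorem~\ref{th:converg} converge to some exact solution ${\cal P}^\dagger\in B_{\rho/2}({\cal P}_0)$ of \eqref{eq:inv-probl-s}. The argument has the usual three-epsilon structure. Given $\eps>0$, choose $k$ a multiple of $r_{\mathrm{num}}$ so large that $\norm{{\cal P}_k-{\cal P}^\dagger}<\eps$ and moreover, by the monotonicity \eqref{eq:sdk-monot1} with the exact solution ${\cal P}^\dagger$ in place of ${\cal P}^*$ (permissible since ${\cal P}^\dagger$ satisfies \eqref{eq:a-xstar}), the quantity $\norm{{\cal P}_j^{\mathbf{\delta_m}}-{\cal P}^\dagger}$ is nonincreasing in $j$ up to $k_*^m$. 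For this fixed $k$, the first case's continuity argument shows ${\cal P}_k^{\mathbf{\delta_m}}\to {\cal P}_k$ as $m\to\infty$, so $\norm{{\cal P}_k^{\mathbf{\delta_m}}-{\cal P}^\dagger}<2\eps$ for $m$ large. Finally, since $k_*^m\to\infty$ we have $k_*^m\ge k$ for $m$ large, whence by monotonicity $\norm{{\cal P}_{k_*^m}^{\mathbf{\delta_m}}-{\cal P}^\dagger}\le \norm{{\cal P}_k^{\mathbf{\delta_m}}-{\cal P}^\dagger}<2\eps$. This proves ${\cal P}_{k_*^m}^{\mathbf{\delta_m}}\to {\cal P}^\dagger$, which is a solution of \eqref{eq:inv-probl-s}.

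Combining the two cases: every subsequence of $\{{\cal P}_{k_*^m}^{\mathbf{\delta_m}}\}$ has a further subsequence converging to an exact solution of \eqref{eq:inv-probl-s}; a short additional argument (using monotonicity of $\norm{{\cal P}_{k_*^m}^{\mathbf{\delta_m}}-{\cal P}^\dagger}$ in the case $k_*^m\to\infty$, and directly in the bounded case) shows the limit is in fact always the same solution ${\cal P}^\dagger$, so the whole sequence converges. I expect the main obstacle to be the bookkeeping in the second case: one must be careful that the fixed index $k$ at which the exact iterates are close to ${\cal P}^\dagger$ is a multiple of $r_{\mathrm{num}}$, that the monotonicity statement \eqref{eq:sdk-monot1} is applied with the right reference solution, and that the continuity of the finite iteration map is genuinely uniform on the relevant bounded set — the latter being guaranteed here by linearity of the $\tilde F_i$ and the uniform lower bound $\underline\alpha$ on the relaxation parameters. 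With those points handled, the argument is essentially a transcription of \cite[Section~2--3]{CHL08} to the linear case.
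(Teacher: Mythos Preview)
Your proposal is correct and is essentially the same approach as the paper's: the paper's proof consists solely of invoking \cite[Theorem~3.6]{CHL08}, and what you have written is precisely a sketch of that stability argument specialized to the present linear operators $\tilde F_i$. The two-case split (bounded versus unbounded stopping indices), the continuity of the finite iteration map, and the monotonicity-plus-convergence three-epsilon argument are exactly the ingredients of the cited result, so there is nothing to add.
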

\begin{proof}
The proof follows from \cite[Theorem~3.6]{CHL08}.
\end{proof}

\subsubsection*{Convergence analysis of the lLK method}

The convergence analysis results for the lLK iteration are analog to the
ones presented in Theorems~\ref{th:converg} and~\ref{th:stabil} for the
lSDK method. In the sequel we summarize the main results that we could
extend to the lLK iteration.

\begin{theorem}[Convergence Analysis] \label{th:converg-anal}
Let $\alpha_k \equiv 1$, the assumption \eqref{eq:a-xstar} be satisfied
for some ${\cal P}^* \in \tilde{X}$, the operators $\tilde{F}_i$ satisfy
\eqref{eq:a-dfb} with $M = 1$, and the stopping index $k_*^\delta$ be defined
as in \eqref{eq:def-discrep}. Then, the following assertions hold:
\begin{itemize}
\item[1)] Let $\delta_i > 0$ as in \eqref{eq:noisy-data}. Then the stopping
index $k_*^\delta$ defined in \eqref{eq:def-discrep} is finite.
\item[2)] ${\cal P}_k^\delta \in B_{\rho/2}({\cal P}_0)$ for all
$k \le k_*^\delta$.
\item[3)] The monotony property in \eqref{eq:sdk-monot1}, \eqref{eq:sdk-monot2}
is satisfied. Moreover, in the case of noisy data, \eqref{eq:sdk-monot-res}
holds true.
\item[4)] For exact data, i.e. $\delta_i = 0$ in \eqref{eq:noisy-data}, the
sequence ${\cal P}_k^\delta$ defined in \eqref{eq:def-iter} converges to a
solution of \eqref{eq:inv-probl-s} as $k \to \infty$.
\item[5)] Let the sequence $\{ \mathbf{\delta_m} \}_{m\in\mathbb{N}}$, the
corresponding sequence of noisy data ${\cal M}_i^{\mathbf{\delta_m}}$, and
the stopping indexes $k_*^m$ be defined as in Theorem~\ref{th:stabil}.
Then, the lLK iterates ${\cal P}_{k_*^m}^\mathbf{\delta_m}$ converge to
a solution of \eqref{eq:inv-probl-s} as $m \to \infty$.
\end{itemize}
\end{theorem}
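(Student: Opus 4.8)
The plan is to deduce all five assertions from the convergence theory of the loping Landweber-Kaczmarz method in \cite{HLS07}, by checking that the hypotheses needed there are met in the present linear, finite-dimensional setting. The decisive structural assumption in \cite{HLS07} is the local tangential cone condition on the forward maps, together with the compatibility condition relating the discrepancy constant $\tau$ to the cone constant. I would first observe that, since each $\tilde{F}_i$ is linear, its Fr\'echet derivative is constant and equals $\tilde{F}_i$ itself, so that
\begin{equation*}
\| \tilde{F}_i({\cal P}) - \tilde{F}_i(\bar{\cal P}) - \tilde{F}_i'(\bar{\cal P})({\cal P}-\bar{\cal P}) \| \;=\; 0
\qquad \text{for all } {\cal P},\bar{\cal P} \in \tilde{X} .
\end{equation*}
Thus the tangential cone condition holds with cone constant $\eta = 0$, and globally on $\tilde{X}$ rather than merely on a ball; the admissibility condition on $\tau$ then reduces exactly to the standing hypothesis $\tau > 2$ in \eqref{eq:def-omk}. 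The normalization $M = 1$ of the statement --- i.e. $\|\tilde{F}_i\| \le 1$, cf. \eqref{eq:a-dfb} --- is the scaling under which each non-loping step of \eqref{eq:def-iter} is a nonexpansive update, and it costs nothing since one may always rescale the operators and the data. Finally, \eqref{eq:a-xstar} supplies the solvability hypothesis: a solution ${\cal P}^*$ of \eqref{eq:inv-probl-s} lying in $B_{\rho/2}({\cal P}_0)$.

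Granted these three ingredients, I would transcribe the estimates of \cite{HLS07} in the natural order. Assertion 3) is the basic energy inequality: in a step with $\omega_k = 1$ one has $\|\tilde{F}_{[k]}({\cal P}_k^\delta) - {\cal M}_{[k]}^\delta\| > \tau\,\delta_{[k]}$, and combining this with $\eta = 0$ and $\|\tilde{F}_{[k]}\| \le 1$ gives a strict decrease of $\|{\cal P}_k^\delta - {\cal P}^*\|^2$ by a quantity bounded below in terms of the squared residual; when $\omega_k = 0$ the iterate does not move, which yields \eqref{eq:sdk-monot2} past the stopping index, while the terminal residual bound \eqref{eq:sdk-monot-res} just records that $\omega_k$ has switched off for every $i$ in the last cycle. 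Assertion 2) is then immediate: \eqref{eq:sdk-monot1} and ${\cal P}^* \in B_{\rho/2}({\cal P}_0)$ keep every iterate in $B_\rho({\cal P}_0)$ --- here, with a global cone condition, this containment is not even needed, but it is recorded for consistency with \cite{HLS07}. Assertion 1) follows by summing the per-step decrements: each cycle containing at least one active step reduces the bounded quantity $\|{\cal P}_k^\delta - {\cal P}^*\|^2$ by a fixed positive multiple of $\min_i \delta_i^2 > 0$, so only finitely many cycles can be active, and the first fully inactive cycle triggers \eqref{eq:def-discrep}.

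For exact data, assertion 4), I would run the standard argument: by 3) the sequence $\|{\cal P}_k - {\cal P}^*\|$ is nonincreasing, hence convergent, and a telescoping estimate together with square-summability of the residuals $\|\tilde{F}_{[k]}({\cal P}_k) - {\cal M}_{[k]}\|$ shows that $({\cal P}_k)$ is Cauchy; its limit ${\cal P}$ satisfies $\tilde{F}_i({\cal P}) = {\cal M}_i$ for all $i$ by continuity, and finiteness of $\dim\tilde{X}$ makes the passage to the limit routine since weak and strong convergence coincide. Assertion 5), stability, is the usual regularization argument and is the step requiring most care: for $\mathbf{\delta_m} \to 0$ one compares ${\cal P}_{k_*^m}^{\mathbf{\delta_m}}$ with the exact-data iterates, and the case in which $k_*^m$ is unbounded must be handled by the diagonal-subsequence device of \cite{HLS07}, using 1)--4) together with the exact-data convergence from 4). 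Apart from this, I expect the work to be bookkeeping: adapting the single-$\delta$ estimates of \cite{HLS07} to the receiver-indexed vector $(\delta_i)$, and carrying the Hermitian inner-product structure through the adjoints $\tilde{F}_i^*$. The analytic core of \cite{HLS07} --- the tangential cone condition --- is free here, which is precisely why the extension goes through.
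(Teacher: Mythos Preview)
Your proposal is correct and aligns with the paper's own approach: the paper's proof is a single sentence stating that the result follows from the corresponding results for the lLK iteration in \cite{HLS07}, and you do exactly this, supplying in addition the verification that the hypotheses of \cite{HLS07} are met (tangential cone condition with $\eta=0$ by linearity, the bound $M=1$ from \eqref{eq:a-dfb}, and the solvability assumption \eqref{eq:a-xstar}). One small remark: the monotonicity argument you sketch for assertion~2) yields ${\cal P}_k^\delta \in B_\rho({\cal P}_0)$ via the triangle inequality, not $B_{\rho/2}({\cal P}_0)$ as stated in the theorem; this is immaterial for the analysis (as you note, the global cone condition makes the containment irrelevant here), but you may want to flag it.
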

\begin{proof}
The proof follows from corresponding results for the lLK iteration for
systems of nonlinear equations in \cite{HLS07}.
\end{proof}

Notice that the assumption $M = 1$ in Theorem~\ref{th:converg-anal}
is nonrestrictive. Indeed, since the operators $\tilde{F}_i$ are linear,
it is enough to scale the equations in \eqref{eq:inv-probl-s} with
appropriate multiplicative constants.

\subsection{Identification of  image and sensitivity} \label{ssec:regul-iter-ims}

Our next goal is to consider the problem of determining both the image
${\cal P}$ as well as the sensitivity kernels ${\cal S}_j$ in
\eqref{eq:inv-probl}. The lLK and lSDK iterations can be extended to the
nonlinear system in a straightforward way

\begin{equation} \label{eq:def-iter-nl}
( {\cal P}_{k+1}^\delta , ({\mathbf b_j})_{k+1}^\delta ) =
( {\cal P}_{k}^\delta , ({\mathbf b_j})_{k}^\delta ) - \omega_k \alpha_k s_k \, ,
\end{equation}
where
\begin{equation} \label{eq:def-sk-nl}
s_k := F'_{[k]}( {\cal P}_{k}^\delta , ({\mathbf b_j})_{k}^\delta )^*
       ( F_{[k]}( {\cal P}_{k}^\delta , ({\mathbf b_j})_{k}^\delta )
         - {\cal M}_i^\delta ) \, ,
\end{equation}
\begin{equation} \label{eq:def-omk-nl}
\omega_k := \begin{cases}
           1  & \norm{F_{[k]}( {\cal P}_{k}^\delta , ({\mathbf b_j})_{k}^\delta )
                - {\cal M}_i^\delta} > \tau \delta_{[k]} \\
           0  & \text{otherwise}
         \end{cases} \, .
\end{equation}
In the lLK iteration we choose $\alpha_k \equiv 1$, and in the lSDK iteration
we choose
\begin{equation} \label{eq:def-alk-nl}
\alpha_k :=
    \begin{cases}
      \| s_k \|^2 / \| F'_{[k]}( {\cal P}_{k}^\delta , ({\mathbf b_j})_{k}^\delta )
         s_k \|^2 & \omega_k = 1 \\
      1           & \omega_k = 0
   \end{cases} \, .
\end{equation}

In order to extend the convergence results in \cite{HLS07, CHL08} for these
iterations, we basically have to prove two facts:
\begin{itemize}
\item[1)] Assumption (14) in \cite{HLS07}.
\item[2)] The {\em local tangential cone condition} \cite[Eq.~(15)]{HLS07},
i.e. the existence of $( {\cal P}_0 , ({\mathbf b_j})_0 ) \in X$ and
$\eta < 1/2$ such that
\begin{multline} \label{eq:a-tcc}
\norm{ F_i( {\cal P} , ({\mathbf b_j}) ) -
   F_i( \bar{\cal P} , (\bar{\mathbf b_j}) ) -
   F_i'( {\cal P} , ({\mathbf b_j}) )
   [ ({\cal P} , ({\mathbf b_j})) - (\bar{\cal P} , (\bar{\mathbf b_j})) ]
   }_{Y}  \leq \\
\eta \norm{ F_i( {\cal P} , ({\mathbf b_j}) ) -
            F_i( \bar{\cal P} , (\bar{\mathbf b_j}) ) }_{Y} \, ,
\end{multline}
for all $( {\cal P} , ({\mathbf b_j}) )$,
$( \bar{\cal P} , (\bar{\mathbf b_j}) ) \in
B_{\rho}( {\cal P}_0 , ({\mathbf b_j})_0 )$, and all $i = 1, \dots, r_{\mathrm{ num}}$.
\end{itemize}
The first one represents no problem. Indeed, the Fr\'echet derivatives of
the operators $F_i$ are locally Lipschitz continuous. Thus, for any
$( {\cal P}_0 , ({\mathbf b_j})_0 ) \in X$ and any $\rho > 0$ we have
$\| F'_i( {\cal P} , ({\mathbf b_j}) ) \| \le M =
M_{\rho,{\cal P}_0,({\mathbf b_j})_0}$ for all
$( {\cal P} , ({\mathbf b_j}) )$ in the ball
$B_{\rho}( {\cal P}_0 , ({\mathbf b_j})_0 ) \subset X$.

The local tangential cone condition however, does not hold. Indeed, the
operators $F_i$ are second order polynomials of the variables $b_{j,n}$
and ${\cal P}$. Therefore, it is enough to verify whether the real
function $f(x,y) = xy$ satisfies 
$$
| f(x,y) - f(\bar{x},\bar{y}) - f'(x,y)((x-\bar{x},y-\bar{y})) |
\leq \eta | f(x,y) - f(\bar{x},\bar{y}) | \, ,
$$
in some vicinity of a point $(x_0,y_0) \in {\mathbb R}^2$ containing a
local minimizer of $f$. This, however, is not the case.

Therefore, the techniques used to prove convergence of the lLK and lSDK
iterations in \cite{HLS07, CHL08} cannot be extended to the nonlinear
system \eqref{eq:inv-probl}.

It is worth noticing that the local tangential cone condition is a standard
assumption in the convergence analysis of adjoint type methods (Landweber,
steepest descent, Levenberg-Marquardt, asymptotical regularization) for
nonlinear inverse problems \cite{EHN96,EL01,DES98,HNS95,KNS08,KS02,Tau94}.
Thus, none of the classical convergence proofs for these iterative methods
can be extended to system \eqref{eq:inv-probl} in a straightforward way.

Motivated by the promising numerical results and efficient performance
of the lLK and lSDK iterations for problems known not to satisfy the
local tangential cone condition (see \cite{HLS07, HKLS07, CHL08}), we
intend to use iteration \eqref{eq:def-iter-nl} for computing approximate
solutions of system \eqref{eq:inv-probl}. This numerical investigation
will be performed in a forthcoming article.

\section{Conclusions} \label{sec:concl}

We presented the description of a discrete mathematical model for Magnetic
Resonance Imaging and derived the corresponding inverse problem for MRI.

We investigate efficient iterative regularization methods for this inverse
problem. An iterative method of Kaczmarz type for obtaining approximate
solutions for the inverse problem is proposed.

Using a particular assumption on the sensitivity kernels, we are able to
prove convergence and stability results for the proposed iterative methods.

The convergence analysis presented in this article extends the results for
the {\em loping Landweber-Kaczmarz} method in \cite{HLS07}. Moreover, we
prove that our method is a convergent iterative regularization method in
the sense of \cite{EHN96}.

\section*{Acknowledgments}
The work of A.L. was supported by the Brazilian National Research Council
CNPq, grants 306020/2006-8 and 474593/2007-0.
J.P.Z. was supported by CNPq under grants 302161/2003-1 and
474085/2003-1.
This work was developed
during the 
permanence of the authors in the Special Semester on Quantitative Biology Analyzed by Mathematical Methods, October 1st, 2007
-January 27th, 2008, organized by RICAM, Austrian Academy of Sciences.


\end{document}